\newtheorem{example}{\textbf{Example}}[section]
\numberwithin{equation}{section}
\newtheorem{thm}{Theorem}[section]
\theoremstyle{definition}
\theoremstyle{remark}
\def\bq{\begin{equation}}
\def\eq{\end{equation}}
\begin{document}
\title[RFEM for biharmonic equation]{Recovery based finite element method for
biharmonic equation in two dimensional}
\author[Y. Huang, H. Wei, W. Yang and N. Yi]{Yunqing Huang$^\dagger$,  Huayi Wei$^\dagger$, Wei Yang$^\dagger$, and  Nianyu Yi$^{\dagger, *}$}
\thanks{$^*$ Corresponding author.}
\address{$^\dagger$ Hunan Key Laboratory for Computation and Simulation in Science and Engineering; School of Mathematics and Computational Science, Xiangtan University, Xiangtan 411105, P.R.China}
\email{huangyq@xtu.edu.cn;\ weihuayi@xtu.edu.cn;\ yangwei@xtu.edu.cn;\ yinianyu@xtu.edu.cn}

\subjclass[2010]{65N30}
\keywords{Biharmonic equation, finite element method, recovery, adaptive}

\begin{abstract} 
    We design and numerically validate a recovery based linear finite element method 
    for solving the biharmonic equation. The main idea is to replace the gradient operator
    $\nabla$ on linear finite element space by $G(\nabla)$ in the weak
    formulation of the biharmonic equation, where $G$ is the recovery
    operator which recovers the piecewise constant function into the linear
    finite element space.  By operator $G$, Laplace operator $\Delta$ is
    replaced by $\nabla\cdot G(\nabla)$. Furthermore the boundary condition on
    normal derivative $\nabla u\cdot \pmb{n}$ is treated by the boundary penalty
    method. The explicit matrix expression of the proposed method is also
    introduced.  Numerical examples on uniform and adaptive meshes are
    presented to illustrate the correctness and effectiveness of the proposed
    method.  
\end{abstract}
\maketitle


\section{Introduction}\label{sec:Int}

The biharmonic equation is a fourth order equation which arises in areas of
continuum mechanics, including linear elasticity theory and the solution of
Stokes flow.  In this work, we consider a $C^0$ linear
finite element method for the biharmonic equation in two-dimensional space. 

\bq\label{1e1}
\Delta^2u(x,y) = f(x,y), \qquad \forall (x,y)\in \Omega, 
\eq
with boundary conditions 

\bq\label{1e2}u(x,y)=g_1(x,y),\qquad (x,y)\in\partial\Omega,\eq
\bq\label{1e3}u_n(x,y)=g_2(x,y),\qquad (x,y)\in\partial\Omega.\eq
Here $\Omega$ is a bounded domain in the two-dimensional space $\mathbb{R}^2$
with a Lipschitz boundary $\partial\Omega$, $u_n=\nabla u\cdot \pmb{n}$ is the normal
derivative of $u$ on $\partial\Omega$, and $\pmb{n}$ is the unit normal vector
pointing outward. The biharmonic operator $\Delta^2$ is defined through

\[
    \Delta^2=\nabla^4=\frac{\partial^4}{\partial x^4}+2\frac{\partial^4}{\partial
x^2\partial y^2}+\frac{\partial^4}{\partial y^4}.
\] 
The basic idea of our method is applying the gradient recovery technique as
pre-processing tool to solve the high-order partial differential equations. 

The mixed form is rewrite the biharmonic equaiton \eqref{1e1}-\eqref{1e3} into a coupled system of Poisson equations as  
\bq\label{1e4}\left\{\begin{aligned}
\Delta v(x,y)=f(x,y),&\qquad (x,y)\in \Omega,\\
\Delta u(x,y)=v(x,y),&\qquad (x,y)\in \Omega,\\
u(x,y)=g_1(x,y),&\qquad (x,y)\in\partial\Omega,\\
u_n(x,y)=g_2(x,y),&\qquad (x,y)\in\partial\Omega.
\end{aligned}\right.
\eq
One can easily see that under this formulation, there are two boundary
conditions for the solutions $u$ but no boundary condition for the new variable
$v$. Thus, it is much more difficult to solve the biharmonic equation with the
boundary conditions \eqref{1e2} and \eqref{1e3}. These computations are dependent on accurate
evaluation of the missing boundary values for $v$, and the computational
procedures are often unsatisfactory. The treatment of the boundary condition
for the splitting method is a challenging problem since poor boundary
approximations may reduce the accuracy of  the numerical solution.  An alternative
technique is the so-called coupled equation approach,
\[\begin{aligned}
&\left\{\begin{aligned}
&\Delta v(x,y)=f(x,y),\qquad (x,y)\in \Omega,\\
&v(x,y)=\Delta u(x,y)-c(u_n-g_2(x,y)),\qquad (x,y)\in\partial \Omega,
\end{aligned}\right.\\
&\left\{\begin{aligned}
&\Delta u(x,y)=v(x,y),\qquad (x,y)\in \Omega,\\
&u(x,y)=g_1(x,y),\qquad (x,y)\in\partial \Omega,
\end{aligned}\right.
\end{aligned}\]
where $c$ is a constant, see \cite{E71, M74}. For a given initial guess
$v_0(x,y)$, an iteration solution $(u_k(x,y), v_k(x,y))$ can be computed
until its convergence.

There are various finite element methods to discretize the biharmonic equation
in the literature. As the most classical approach, the $C^1$ conforming finite
element methods require the basis functions and their derivatives are
continuous on $\bar{\Omega}$, which are rarely used in practice for their too
many degrees of freedom and implementation complexity.  For example, the
Argyris finite element method \cite{C91} has $21$ degrees of freedom for
triangles. The nonconforming finite element methods such as the Adini
element or Morley element \cite{BL04, C91, WX06} are popular methods for
numerical solution of the high-order partial differential equations.  The key
idea in nonconforming methods is to use the penalty term to ensure the
convergence into the natural energy space of the variational problem.  Mixed
finite element method is another choice which is based on the equivalent form
\eqref{1e4} and only require the Lagrangian finite element spaces, which are
widely used in practice, but they require very careful treatment on the
essential and natural boundary conditions. The literature on the mixed finite
element methods is vast, and we refer to \cite{BG11, BF91, GNP08, W04} and the
references therein for the detail of these methods. The discontinuous Galerkin
method is also a choice which is based on standard continuous Lagrangian finite
element spaces \cite{BS05, EGHLMT02} or completely discontinuous finite element
spaces \cite{GH09, SM07}.  Other methods which have been developed for fourth
order problems include finite difference methods \cite{BCCF09, CLL08, GP79},
and finite volume method \cite{EGHL12}.

An alternative to aforementioned methods is the recovery based finite element
method developed in recent years \cite{CGZZ16, GZZ17, L15}. It is a
nonconforming finite element method based on the discretization of the Laplace
operator defined by applying the gradient recovery operator on the gradient of
the $C^0$ linear element. The variational formulation of \eqref{1e1} involves
the term $(\Delta u, \Delta v)$. The idea in this paper is to redefine the
discrete gradient operator and furthermore the Laplace operator involved in the
weak formulation, by embedding a gradient recovery operator in pre-processing,
such that the linear finite element can be used for solving the biharmonic
equation. The resulting finite element scheme is state as follows:

\bq \label{1es} 
\int_{\Omega}\nabla\cdot G(\nabla u_h)\nabla\cdot G(\nabla
v_h)dx+\frac{\sigma}{h^2}\int_{\Gamma}G(\nabla u_h)\cdot \pmb{n}G(\nabla v_h)\cdot
\pmb{n}ds=\int_{\Omega}fvdx+\frac{\sigma}{h^2}\int_{\partial\Omega}g_2G(\nabla v_h)\cdot
\pmb{n}ds,
\eq
where gradient recovery operator $G$ is embedded in a priori way such that the
$\nabla\cdot G(\nabla v_h)$ is well-defined for any function $v_h\in V_h$. The
boundary condition \eqref{1e3} is incorporating in the finite element scheme by
a penalty method. Notice that the difference between our scheme \eqref{1es} and
the existing recovery based finite element scheme for biharmonic equation
\cite{CGZZ16, GZZ17, L15} is the treatment of boundary condition \eqref{1e3},
especially for the non-homogeneous boundary data. In \cite{CGZZ16, GZZ17, L15},
the boundary condition \eqref{1e3} is treated as an essential boundary
condition, that is enforcing the numerical solution satisfies the boundary
condition \eqref{1e3}. While in our scheme \eqref{1es}, we impose the boundary
condition \eqref{1e3} using the boundary penalty method. In this paper, we
develop and numerically investigate the recovery based finite element method
\eqref{1es} for biharmonic equation.

The remaining parts of this paper are organized as follows. In Section
\ref{sec:RFEM}, we introduce the gradient recovery operator and then present a
recovery based linear finite element method for the biharmonic equation. In
Section \ref{sec:IMP}, we discuss the implementation issue. And in the
following Section \ref{sec:NE}, we present some numerical experiments to show
the correctness and effectiveness of our method. Finally,  we make some
concluding remarks in Section \ref{sec:CON}. 

\section{Recovery based finite element method}\label{sec:RFEM}
Consider the biharmonic equation
\[
\Delta^2u(x,y)=f(x,y), \qquad \forall (x,y)\in \Omega=(0, 1)^2, 
\]
with boundary conditions 
\[u(x,y)=0,\quad \nabla u(x,y)\cdot \pmb{n}=g(x,y),\quad \rm{on}\ \partial\Omega\]

In weak form, this problem reads: Find $u\in V^g$ such that
\[a(u,v)=L(v)\quad \forall v\in V^0,\] where
\[V^g=\{v\in H^1(\Omega): \nabla v\in H(div), v|_{\partial\Omega}=0, \quad \nabla u\cdot \pmb{n}|_{\partial\Omega}=g\},\]
\[V^0=\{v\in H^1(\Omega): \nabla v\in H(div), v|_{\partial\Omega}= \nabla v\cdot \pmb{n}|_{\partial\Omega}=0\},\]
\[a(u,v)=\int_{\Omega}\nabla\cdot(\nabla u)\nabla\cdot(\nabla v)dx,\] and \[L(v)=\int_{\Omega}fvdx.\]

\subsection{Discrete spaces} 
Let $\mathcal{T}_h$ be a triangular partition of $\Omega\in \mathbb{R}^2$ with
mesh size $h$, and $h_{\tau}:=\text{diam}(\tau)$ for each element
$\tau\in\mathcal{T}_h$. We denote the set of vertices and edges of $\mathcal{T}_h$
by $\mathcal {N}_h$ and $\mathcal{E}_h$, respectively. The length of $E\in
\mathcal{E}_h$ is denoted by $h_E=\text{diam}(e)$. For each $E\in \mathcal{E}_h$,
denote a unit vector normal to $E$ by $n_E$, and $\omega_E$ denotes the union
of all elements that share $E$.  On each element $\tau\in\mathcal{T}_h$,
$P_k(\tau)$
denotes the polynomials on $\tau$ of degree $\leq k$.  Consider the $C^0$ linear
finite element space $S_h$ associated with $\mathcal {T}_h$ and defined by

\[
    S_h=\{v\in H^1(\Omega):v\in P_1(\tau), \forall \tau\in\mathcal{T}_h\}=\mathrm{span}\{\phi_z:
z\in \mathcal{N}_h\}.
\]  
The node basis functions of $S_h$ are the standard
Lagrangian basis functions.  The element patch is defined by
$\omega_z=\text{supp}\phi_z$. Furthermore, the peicewise constant function
space is denoted as 
\[\begin{aligned}
W_h:=\{w_h \in (L^{\infty}(\Omega))^2: w_h|_\tau\in (P_0(\tau))^2, \forall \tau\in\mathcal{T}_h\}.
\end{aligned}\]

\subsection{Recovery operator} 
In this subsection, we introduce the recovery operator which can recover a
piecewise constant function into the continuous piecewise linear finite element
space.  For simplicity, we take the weighted averaging recovery operator $G:
W_h\rightarrow S_h\times S^h$, which is defined as follows: for $w_h \in W_h$,

\bq\label{2e1}
G(w_h):=\sum\limits_{z\in\mathcal{N}_h}G(w_h)(z)\phi_z,\qquad
G(w_h)(z):=\sum\limits_{\tau \in\omega_z}w_\tau w_h|_\tau,
\eq
where the weights can be choosen as following \cite{HJY12}
\bq
\label{w1}{\rm Simple\ averaging:}\qquad w_\tau=\frac{1}{\sharp \omega_z},
\eq
\bq
\label{w2}{\rm Harmonic\ averaging:}\qquad
w_T=\frac{1/|\tau|}{\sum\limits_{\tau\in \omega_z}1/|\tau|}.
\eq 

Given $u_h\in S_h$, its gradient $\nabla u_h$ is piecewise constant and may
discontinuous across each element, thus $\Delta u_h$ is not well-defined. To
fix this problem, we use the recovery operator $G$ to 'lift' the gradient
$\nabla u_h$ into a vector finite element space in which $\nabla\cdot G(\nabla
u_h)$ is well-defined. In other words, we define the discrete Laplace operator
by $\Delta u_h:=\nabla\cdot G(\nabla u_h)$ for piecewise linear function
$u_h\in S_h$, where 
\[
G(\nabla u_h) = (G(\partial_x u_h), G(\partial_y u_h))^T.
\]

\subsection{Recovery based linear finite element scheme}
After defining the finite element spaces and the gradient recovery operators,
we now introduce the recovery based finite element method with a penalty for the
biharmonic equation. Let
\[S_h^0=S_h\cap H_0^1(\Omega)=\{v_h\in S_h: v_h|_{\partial\Omega}=0\}.\] 
The recovery based finite element scheme is to find $u_h\in S_h^0$ such that 
\bq\label{2e3}
a_h(u_h,v_h)=\int_{\Omega}fv_hdx+\frac{\sigma}{h^2}\int_{\Gamma}g_2G(\nabla v_h)\cdot \pmb{n}ds,\quad \forall v_h\in S_h^0,\eq
where 
\[a_h(u_h, v_h):=\int_{\Omega}\nabla\cdot G(\nabla u_h)\nabla\cdot G(\nabla v_h)dx+\frac{\sigma}{h^2}\int_{\Gamma}G(\nabla u_h)\cdot \pmb{n}G(\nabla v_h)\cdot \pmb{n}ds,\]
and the recovery operator $G$ is defined in \eqref{2e1}. Notice that the boundary
conditions \eqref{1e2} and \eqref{1e3} are treated in different ways. The
boundary condition \eqref{1e2} is treated as an essential boundary condition,
while the boundary condition \eqref{1e3} is imposed weakly with a boundary
penalty term in the discrete scheme.

\begin{thm}\label{thm1}
For the recovery based linear finite element scheme \eqref{2e3}, there exists a unique solution $u_h\in S_h^0$.
\end{thm}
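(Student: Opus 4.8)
The plan is to exploit the fact that \eqref{2e3} amounts to a square linear system on the finite-dimensional space $S_h^0$, so that existence for every right-hand side is equivalent to uniqueness, and the latter follows once we know the bilinear form $a_h(\cdot,\cdot)$ is positive definite on $S_h^0$. Since $a_h$ is manifestly symmetric and (for $\sigma>0$)
\[
a_h(v_h,v_h)=\int_{\Omega}\big(\nabla\cdot G(\nabla v_h)\big)^2\,dx+\frac{\sigma}{h^2}\int_{\Gamma}\big(G(\nabla v_h)\cdot\pmb{n}\big)^2\,ds\ \ge\ 0,
\]
it suffices to show that $a_h(u_h,u_h)=0$ forces $u_h=0$. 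So I would start from $a_h(u_h,u_h)=0$, which, both summands being nonnegative, gives at once
\[
\nabla\cdot G(\nabla u_h)\equiv 0\ \text{ in }\Omega,\qquad G(\nabla u_h)\cdot\pmb{n}\equiv 0\ \text{ on }\Gamma=\partial\Omega .
\]

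Next I would pass from these two identities to information on $\nabla u_h$ by integration by parts. Since $u_h\in S_h$ is piecewise linear, $\nabla u_h\in W_h$, while $G(\nabla u_h)\in S_h\times S_h\subset (H^1(\Omega))^2\hookrightarrow H(\mathrm{div},\Omega)$, so Green's formula applies and
\[
\int_{\Omega}G(\nabla u_h)\cdot\nabla u_h\,dx=-\int_{\Omega}\big(\nabla\cdot G(\nabla u_h)\big)u_h\,dx+\int_{\partial\Omega}\big(G(\nabla u_h)\cdot\pmb{n}\big)u_h\,ds .
\]
The volume term vanishes because $\nabla\cdot G(\nabla u_h)\equiv 0$, and the boundary term vanishes because $u_h|_{\partial\Omega}=0$ (indeed $u_h\in S_h^0$), so $\int_{\Omega}G(\nabla u_h)\cdot\nabla u_h\,dx=0$.

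The last ingredient, and the part I expect to be the real obstacle, is a positivity property of the recovery operator: for every $v_h\in S_h$ one has $\int_{\Omega}G(\nabla v_h)\cdot\nabla v_h\,dx\ge 0$, with equality only if $\nabla v_h=0$; equivalently, $G\circ\nabla$ is injective on $S_h^0$ and the induced bilinear form is positive definite on $\nabla S_h^0\subset W_h$. I would establish this from the explicit description \eqref{2e1} of $G$ together with the strict positivity of the averaging weights \eqref{w1}--\eqref{w2} and of the local mass integrals $\int_{\tau}\phi_z\,dx=|\tau|/3$; this is the stability mechanism underlying the recovery-based discretizations of \cite{CGZZ16,GZZ17,L15}, and writing it out element by element is the only genuinely technical point. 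Granting it, $\int_{\Omega}G(\nabla u_h)\cdot\nabla u_h\,dx=0$ forces $\nabla u_h\equiv 0$, hence $u_h$ is constant on the connected set $\Omega$; combined with $u_h|_{\partial\Omega}=0$ this yields $u_h\equiv 0$. Thus the system matrix of \eqref{2e3} is nonsingular, which gives both existence and uniqueness.
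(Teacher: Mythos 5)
Your reduction is sound as far as it goes, and it is a genuinely different route from the paper's: the paper argues via convexity of the energy functional $J(u_h)=\tfrac12 a_h(u_h,u_h)-L(u_h)$ and identifies the scheme as its Euler--Lagrange equation, whereas you pass to the square linear system, reduce everything to the injectivity statement $a_h(u_h,u_h)=0\Rightarrow u_h=0$, and then use Green's formula to convert $\nabla\cdot G(\nabla u_h)=0$ together with $u_h|_{\partial\Omega}=0$ into $\int_\Omega G(\nabla u_h)\cdot\nabla u_h\,dx=0$. Those steps are correct: $G(\nabla u_h)\in S_h\times S_h\subset H(\mathrm{div},\Omega)$, so the integration by parts is legitimate, and you do not even need the boundary penalty term for that part of the argument.

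The gap is the final ``positivity property,'' which you defer as a technical computation but which is in fact the entire content of the well-posedness question, and which is false in the generality in which you state it. The map $w_h\mapsto\int_\Omega G(w_h)\cdot w_h\,dx$ is the quadratic form of a nonsymmetric matrix: the recovery weights in \eqref{w1}--\eqref{w2} are not the mass-matrix weights $|\tau|/3$ that arise when you integrate $\phi_z$ against $w_h$, and the symmetric part of the resulting matrix need not be positive semidefinite on $W_h$. Already in the one-dimensional analogue with two elements of lengths $h_1$ and $h_2$ and simple averaging, the form equals $\tfrac14\bigl(3h_1w_1^2+(h_1+h_2)w_1w_2+3h_2w_2^2\bigr)$, whose determinant $\tfrac1{16}\bigl(9h_1h_2-\tfrac14(h_1+h_2)^2\bigr)$ is negative once $h_2/h_1$ is small, so the form is indefinite on $W_h$ without mesh restrictions. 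What you actually need is positivity on the subspace $\nabla S_h^0\subset W_h$ (with equality only at zero), and that is precisely the coercivity/injectivity property of the recovery-based discrete Laplacian which the cited works \cite{CGZZ16, GZZ17, L15} must establish under hypotheses on the triangulation and on the recovery operator; it is not a routine element-by-element verification. To be fair, the paper's own proof has the same lacuna --- convexity of $J$ yields neither existence nor uniqueness unless $a_h$ is positive definite on $S_h^0$, and this is never checked --- but your write-up should not present the step as granted: either impose an explicit hypothesis guaranteeing $a_h(v_h,v_h)>0$ for all nonzero $v_h\in S_h^0$, or prove the positivity of $\int_\Omega G(\nabla v_h)\cdot\nabla v_h\,dx$ on $\nabla S_h^0$ for the class of meshes under consideration.
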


\begin{proof}
Based on the scheme \eqref{2e3}, we define the following functional:
\bq\label{2e3a}\begin{aligned}
J(u_h):=&\frac{1}{2}\int_{\Omega}\left(\nabla\cdot G(\nabla u_h)\right)^2dx+\frac{\sigma}{2h^2}\int_{\Gamma}\left(G(\nabla u_h)\cdot \pmb{n}\right)^2ds\\
&-\int_{\Omega}fu_hdx-\frac{\sigma}{h^2}\int_{\Gamma}g_2G(\nabla u_h)\cdot \pmb{n}ds.
\end{aligned}\eq
Notice that the first and second terms of $J(u_h)$ are convex, and the third and fourth terms of $J(u_h)$ are linear with respect to $u_h$, then the functional $J(u_h)$ is a convex functional. 
Take the derivative of the functional $J(u_h)$, and for any $v_h\in S_h^0$ we have 
\[
\left(\frac{\delta J(u_h)}{\delta u_h}, v_h\right)=a_h(u_h, v_h)-\int_{\Omega}fv_hdx-\frac{\sigma}{h^2}\int_{\Gamma}g_2G(\nabla v_h)\cdot \pmb{n}ds=0.
\]
Then the uniqueness of the solution of scheme \eqref{2e3} is approved.
\end{proof}

\section{Implementation}\label{sec:IMP} 
In this section, we discuss the implementation of the term $(\nabla\cdot
G(\nabla u_h), \nabla\cdot G(\nabla v_h))$ in details, the calculation of the
other terms in recovery based fintie element scheme \eqref{2e3} are similar.

For simplicity, we only take the simple averaging (the weights are chosen as
\eqref{w1}) for illustration. For a mesh node $z_i\in \mathcal{N}_h$, let
$\phi_i$ denotes the basis function at node $z_i$, $\omega_i$ denotes the
element patch of $z_i$, and $\mathcal{N}(i)$ denotes the mesh nodes in
$\omega_i$. Then 
\[V_h=\text{span}\{\phi_i\}_{i=1}^N, \quad N=\sharp \mathcal{N}_h,\]
and 
\[u_h=\sum\limits_{i=1}^N=[\phi_1,\cdots, \phi_N]U, \qquad U=\begin{bmatrix}
u_1\\ u_2\\ \vdots\\ u_{N}
\end{bmatrix}.\]
From \eqref{2e1}, we have 
\[\begin{aligned}G(\nabla u_h)&=\begin{pmatrix}G(\partial_x u_h)\\
        G(\partial_y
    u_h)\end{pmatrix}=\begin{pmatrix}\sum\limits_{j=1}^NG(\partial_x u_h)(z_j)\phi_j\\
        \sum\limits_{j=1}^NG(\partial_y u_h)(z_j)\phi_j\end{pmatrix}=\begin{pmatrix}[\phi_1,\ \cdots,\ \phi_{N}]AU\\ 
[\phi_1,\ \cdots,\ \phi_{N}]BU\end{pmatrix},\\
G(\nabla \phi_i)&=\begin{pmatrix}G(\partial_x\phi_i)\\
G(\partial_y \phi_i)\end{pmatrix}=\begin{pmatrix}[C_{i,1},\cdots, C_{i,N}][\phi_1,\ \cdots,\ \phi_{N}]^T\\ 
[D_{i,1},\cdots, D_{i,N}][\phi_1,\ \cdots,\ \phi_{N}]^T\end{pmatrix},
\end{aligned}\]
where
\[
A_{i,j}=\left\{\begin{aligned}\sum\limits_{\tau\in \omega_i}\frac{1}{\sharp
        \omega_i}\partial_x\phi_j|_\tau,\quad &{\rm if}\ j=i\in \mathcal{N}(i),\\
\sum\limits_{\tau\in \omega_i\cap \omega_j}\frac{1}{\sharp
\omega_i}\partial_x\phi_j|_\tau,\quad &{\rm if}\ j\neq i\in \mathcal{N}(i),\\
0,\qquad &{\rm if}\ j\notin \mathcal{N}(i),
\end{aligned}\right.\]
\[
B_{i,j}=\left\{\begin{aligned}\sum\limits_{\tau\in \omega_i}\frac{1}{\sharp
    \omega_i}\partial_y\phi_j|_\tau,\quad &{\rm if}\ j=i\in \mathcal{N}(i),\\
\sum\limits_{\tau\in \omega_i\cap \omega_j}\frac{1}{\sharp
\omega_i}\partial_y\phi_j|_\tau,\quad &{\rm if}\ j\neq i\in \mathcal{N}(i),\\
0,\qquad &{\rm if}\ j\notin \mathcal{N}(i),
\end{aligned}\right.\]
\[
C_{i,j}=\left\{\begin{aligned}\sum\limits_{\tau\in \omega_j}\frac{1}{\sharp
    \omega_j}\partial_x\phi_i|_\tau,\quad &{\rm if}\ j=i\in \mathcal{N}(i),\\
\sum\limits_{\tau\in \omega_j\cap \omega_i}\frac{1}{\sharp
\omega_j}\partial_x\phi_i|_\tau,\quad &{\rm if}\ j\neq i\in \mathcal{N}(i),\\
0,\qquad &{\rm if}\ j\notin \mathcal{N}(i),
\end{aligned}\right.\]
\[
D_{i,j}=\left\{\begin{aligned}\sum\limits_{\tau\in \omega_j}\frac{1}{\sharp
    \omega_j}\partial_y\phi_i|_\tau,\quad &{\rm if}\ j=i\in \mathcal{N}(i),\\
\sum\limits_{\tau\in \omega_j\cap \omega_i}\frac{1}{\sharp
\omega_j}\partial_y\phi_i|_\tau,\quad &{\rm if}\ j\neq i\in \mathcal{N}(i).\\
0,\qquad &{\rm if}\ j\notin \mathcal{N}(i).
\end{aligned}\right.
\]

By taking $v_h=\phi_i, i=1,\cdots, N$, in matrix form, we obtain 
\[\begin{aligned}
\left(\nabla\cdot G(\nabla u_h), \nabla\cdot G(\nabla v_h)\right)
=&\left(\partial_xG(\partial_x u_h)+\partial_yG(\partial_y u_h),
\partial_xG(\partial_x v_h)+\partial_yG(\partial_y v_h)\right)\\
=&\left([\partial_x\phi_{1},\ \cdots,\ \partial_x\phi_{N}]AU+[\partial_y
\phi_{1},\ \cdots,\ \partial_y\phi_{N}]BU,\right.\\
&\left.\qquad C[\partial_x \phi_{1},\ \cdots,\
\partial_x\phi_{N}]^T+D[\partial_y\phi_{1},\ \cdots,\ \partial_y\phi_{N}]^T\right)\\
=&(CPA+CQB+DSA+DTB)U.\end{aligned}\]
where the matrices are calculated as following
\[P=\int_{\Omega}\begin{bmatrix}
\partial_x\phi_{1}\\\partial_x \phi_{2}\\ \vdots \\\partial_x \phi_{N}
\end{bmatrix}[\partial_x\phi_{1},\ \partial_x\phi_{2},\ \cdots,\ \partial_x\phi_{N}]dxdy,\]
\[Q=\int_{\Omega}\begin{bmatrix}
\partial_x\phi_{1}\\\partial_x \phi_{2}\\ \vdots\\ \partial_x \phi_{N}
\end{bmatrix}[\partial_y\phi_{1},\ \partial_y\phi_{2},\ \cdots,\ \partial_y\phi_{N}]dxdy,\]
\[S=\int_{\Omega}\begin{bmatrix}
\partial_y \phi_{1}\\\partial_y  \phi_{2}\\ \vdots\\ \partial_y  \phi_{N}
\end{bmatrix}[\partial_x\phi_{1},\ \partial_x \phi_{2},\ \cdots,\ \partial_x\phi_{N}]dxdy,\]
\[T=\int_{\Omega}\begin{bmatrix}
\partial_y \phi_{1}\\\partial_y \phi_{2}\\ \vdots\\ \partial_y \phi_{N}
\end{bmatrix}[\partial_y \phi_{1},\ \partial_y\phi_{2},\ \cdots\ ,\ \partial_y \phi_{N}]dxdy.\]


\section{Numerical examples}\label{sec:NE}
In this section, we present some numerical examples to demonstrate the
performance of the recovery based linear finite element for the biharmonic
equation presented in \eqref{2e3}. We investigate the proposed recovery based
finite element method on the uniform regular mesh and the Centroidal
Voronoi-Delaunay Triangulation (CVDT) mesh. Also, we are interesting the
performance of the recovery based finite element method on adaptive meshes when
the solution of biharmonic equation appears singularity.

\begin{example}\label{exm1}
We first consider the biharmonic equation with homogeneous boundary condtions
\bq\label{4e1}\left\{
\begin{aligned}
&\Delta^2u(x,y)=f(x,y), \qquad \forall (x,y)\in \Omega=(0, 1)^2, \\
&u=0,\qquad \nabla u\cdot \pmb{n}=0,\quad \rm{on}\ \partial\Omega.
\end{aligned}\right.\eq
The exact solution is chosen the following function: \[u=\sin^2(\pi x)\sin^2(\pi y).\] 
Hence we choose $f=\Delta^2u$ as the function defined by
\[\begin{aligned}
f(x,y)=&8\pi^4(\sin^2(\pi x)-\cos^2(\pi x))\sin^2(\pi y)+8\pi^4\sin^2(\pi x)(\sin^2(\pi y)-\cos^2(\pi y))\\
&+8\pi^4(\sin^2(\pi x)-\cos^2(\pi x))(\sin^2(\pi y)-\cos^2(\pi y)).
\end{aligned}\]
\end{example}

The errors $\|u-u_h\|$, $\|\nabla u- \nabla u_h\|$, $\|\nabla u- G(\nabla
u_h)\|$, $\|\Delta u-\nabla\cdot G(\nabla u_h)\|$ and corresponding rates of
convergence are reported in Table \ref{tabexm11} and Table \ref{tabexm12}.
Table \ref{tabexm11} shows the numerical results on the uniform mesh in regular
pattern and Table \ref{tabexm12} shows the numerical results on the CVDT mesh.
We see clearly that: i) The $L^2$ errors $\|u-u_h\|$ and the gradient errors
$\|\nabla u- \nabla u_h\|$ converge at the rate of second order and first
order, respectively, which are optimal for the linear approximation; ii) The
recovered gradient $G(\nabla u_h)$ converges to the exact gradient $\nabla u$
under the second order rate, and one order higher than the gradient of the finite
element approximation. This shows that the recovered gradient is
superclose to the exact one; iii) The convergence rate of the error $\|\Delta u-\nabla\cdot
G(\nabla u_h)\|$ is first order.

\begin{table}[!htp]
\caption{Example \ref{exm1}, regular mesh, errors and convergence rates}\label{tabexm11}
\begin{tabular}[c]{|c|c|c|c|c|}
\hline
Dof &   441 &  1681 &  6561 & 25921\\\hline
$\| u - u_h\|$ &  0.01394 &  0.00344 &  0.00086 &  0.00021\\\hline
Order & -- &  2.02 &  2.00   &  2.00  \\\hline
$\|\nabla u - \nabla u_h\|$ &  0.39714 &  0.19032 &  0.09431 &  0.0471 \\\hline
Order & -- &  1.06 &  1.01 &  1.00  \\\hline
$\|\nabla u - G(\nabla u_h)\|$ &  0.0418  &  0.01051 &  0.00264 &  0.00066\\\hline
Order & -- &  1.99 &  1.99 &  2.00  \\\hline
$\|\Delta u - \nabla\cdot G(\nabla u_h)\|$ &  1.64494 &  0.80249 &  0.3989  &  0.19915\\\hline
Order & -- &  1.04 &  1.01 &  1.00  \\\hline
\end{tabular}
\end{table}

\begin{table}[!htp]
\caption{Example \ref{exm1}, CVDT mesh, errors and convergence rates}\label{tabexm12}
\begin{tabular}[c]{|c|c|c|c|c|}
\hline
Dof &   499 &  1920 &  7566 & 29952\\\hline
$\| u - u_h\|$ &  0.01205 &  0.00296 &  0.00069 &  0.00017\\\hline
Order & -- &  2.03 &  2.10  &  2.03\\\hline
$\|\nabla u - \nabla u_h\|$ &  0.37264 &  0.17001 &  0.07189 &  0.03457\\\hline
Order & -- &  1.13 &  1.24 &  1.06\\\hline
$\|\nabla u - G(\nabla u_h)\|$ &  0.03859 &  0.00981 &  0.00234 &  0.00059\\\hline
Order & -- &  1.98 &  2.07 &  1.99\\\hline
$\|\Delta u - \nabla\cdot G(\nabla u_h)\|$ &  1.34359 &  0.65669 &  0.34152 &  0.17329\\\hline
Order & -- &  1.03 &  0.94 &  0.98\\\hline
\end{tabular}
\end{table}


\begin{table}[!htp]
\caption{Example \ref{exm4}, regular mesh, errors and convergence rates}\label{tabexm41}
\begin{tabular}[c]{|c|c|c|c|c|}
\hline
Dof &   441 &  1681 &  6561 & 25921\\\hline
$\| u - u_h\|$ &  0.04686 &  0.01166 &  0.00292 &  0.00073\\\hline
Order & -- &  2.01 &  2.00   &  2.00  \\\hline
$\|\nabla u - \nabla u_h\|$ &  1.02312 &  0.48499 &  0.23852 &  0.11861\\\hline
Order & -- &  1.08 &  1.02 &  1.01\\\hline
$\|\nabla u - G(\nabla u_h)\|$ &  0.18431 &  0.04731 &  0.01207 &  0.00307\\\hline
Order & -- &  1.96 &  1.97 &  1.98\\\hline
$\|\Delta u - \nabla\cdot G(\nabla u_h)\|$ &  4.90213 &  2.47485 &  1.27612 &  0.66004\\\hline
Order & -- &  0.99 &  0.96 &  0.95\\\hline
\end{tabular}
\end{table}

\begin{table}[!htp]
\caption{Example \ref{exm4}, CVDT mesh, errors and convergence rates}\label{tabexm42}
\begin{tabular}[c]{|c|c|c|c|c|}
\hline
Dof &   499 &  1920 &  7566 & 29952\\\hline
$\| u - u_h\|$ &  0.03992 &  0.00983 &  0.00238 &  0.00059\\\hline
Order & -- &  2.02 &  2.05 &  2.01\\\hline
$\|\nabla u - \nabla u_h\|$ &  0.89327 &  0.38386 &  0.16833 &  0.07981\\\hline
Order & -- &  1.22 &  1.19 &  1.08\\\hline
$\|\nabla u - G(\nabla u_h)\|$ &  0.16318 &  0.04231 &  0.01067 &  0.00275\\\hline
Order & -- &  1.95 &  1.99 &  1.96\\\hline
$\|\Delta u - \nabla\cdot G(\nabla u_h)\|$ &  4.59341 &  2.38537 &  1.29143 &  0.67979\\\hline
Order & -- &  0.95 &  0.89 &  0.93\\\hline
\end{tabular}
\end{table}

\begin{example}\label{exm4} 
For the second example, we consider the biharmonic equation with
non-homogeneous boundary condition 

\bq\label{4e2}\left\{
\begin{aligned}
&\Delta^2u(x,y)=f(x,y), \qquad \forall (x,y)\in \Omega=(0, 1)^2, \\
&u=g,\qquad \nabla u\cdot \pmb{n}=h,\quad \rm{on}\ \partial\Omega.
\end{aligned}\right.\eq
We take \[u=\sin(2\pi x)\sin(2\pi y)\] and then the corresponding problem have following type of boundary conditions
\[u|_{\partial \Omega}= 0, \qquad \nabla u\cdot \pmb{n}|_{\partial \Omega}\neq 0.\]
The corresponding right hand side function $f$ is then take  
\[f=64\pi^4\sin(2\pi x)\sin(2\pi y).\]
\end{example}

The numerical results are reported in Table \ref{tabexm41} and Table
\ref{tabexm42}. The results indicate that both $u$ and $\nabla u$ achieve
optimal convergence order, and the recovered gradient $G(\nabla u_h)$ is
superclose to $\nabla u$. These numerical results show that the recovery
based finite element method also converges with optimal rates for the biharmonic
equation with non-homogeneous boundary conditions.  

In the following, we apply the recovery based linear finite element method for
the biharmonic equation with a singular solution. An adaptive algorithm is used
to resolve the singularity. Note that $\nabla\cdot G(\nabla u_h)$ is a
piecewise constant function, and it can be restored to the continuous piecewise
linear space by recovery operator $G$.  Since $G(\nabla\cdot G(\nabla u_h))$ is
better approximation of $\Delta u$ than $\nabla\cdot G(\nabla u_h)$, we can use
\[\|G(\nabla\cdot G(\nabla u_h)) - \nabla\cdot G(\nabla u_h)\|\] 
as an recovery type a posteriori error estimator to guide the mesh refinement. In the adaptive procedure, the D\"{o}rfler marking strategy \cite{Dor96} with bulk parameter $\theta=0.2$ is used for marking the elements to be refined. We present three numerical examples to investigate the performance
of recovery based linear finite element method on the adaptive meshes.

\begin{example}\label{aexm1}
We consider the model problem \eqref{1e1} on a L-shaped domain $\Omega=(-1,
1)^2\setminus ([0,1)\times (-1, 0])$ with the following exact singular solution
\cite{G92}:

\bq\label{4e3}
u(r,\theta) = (r^2\cos^2\theta-1)^2(r^2\sin^2\theta-1)^2r^{(1+\alpha)}g_{\alpha, \omega}(\theta)
\eq
where
$\alpha = 0.544483736782464$ is a noncharacteristic root of $\sin^2(\alpha\omega)=\alpha^2\sin^2\omega$, $\omega=\frac{3\pi}{2}$ and 
\bq\label{4e4}\begin{aligned}
g_{\alpha, \omega}(\theta) = &\left(\frac{1}{\alpha-1}\sin((\alpha-1)\omega) - \frac{1}{\alpha+1}\sin((\alpha+1)\omega)\right)\times\left(\cos((\alpha-1)\theta) - \cos((\alpha+1)\theta)\right) \\
&-\left(\frac{1}{\alpha-1}\sin((\alpha-1)\theta) - \frac{1}{\alpha+1}\sin((\alpha+1)\theta)\right)\times\left(\cos((\alpha-1)\omega) - \cos((\alpha+1)\omega)\right).
\end{aligned}\eq
\end{example}

\begin{figure}
\begin{center}
    \includegraphics[width=0.45\textwidth]{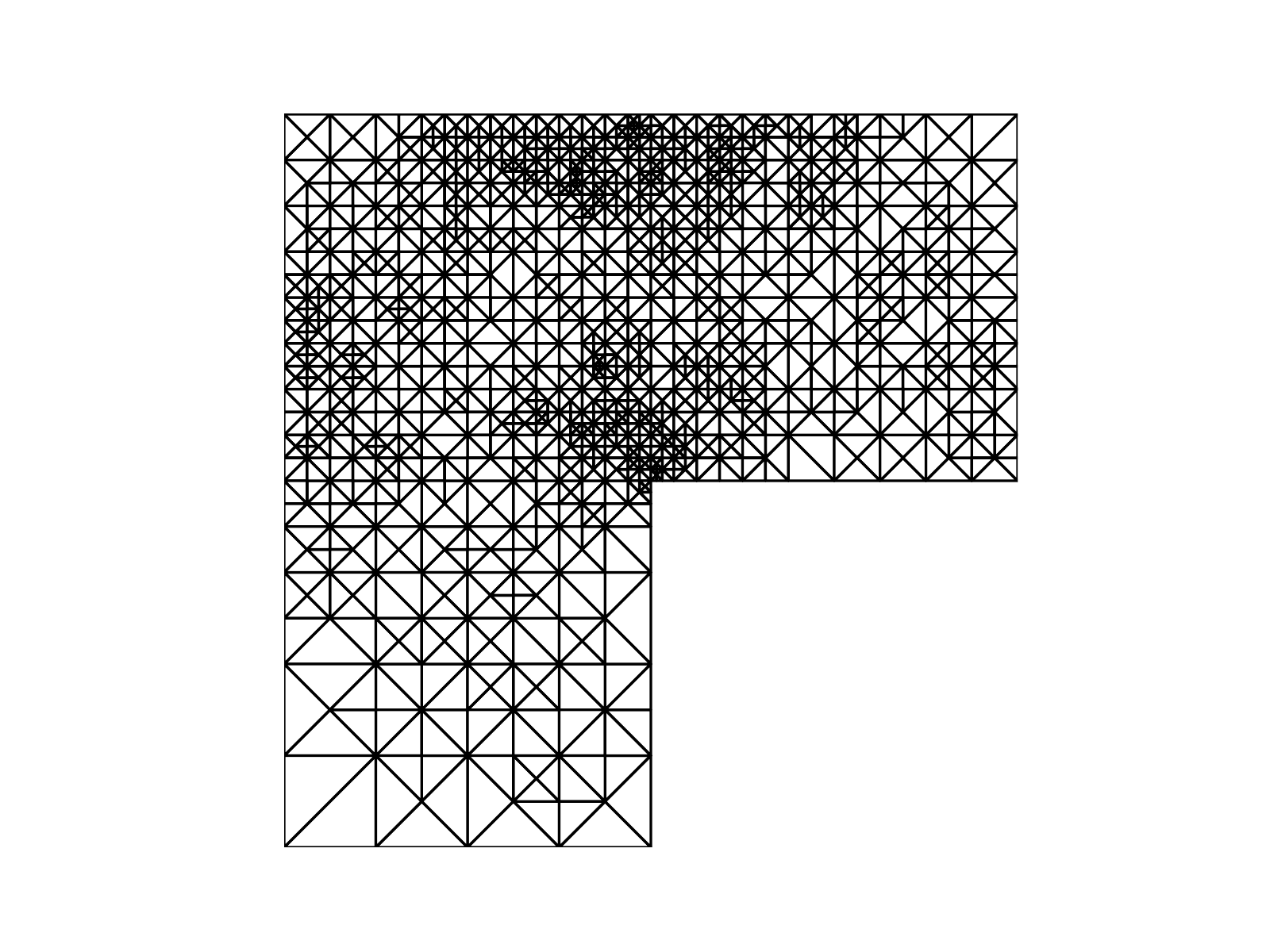}
    \includegraphics[width=0.45\textwidth]{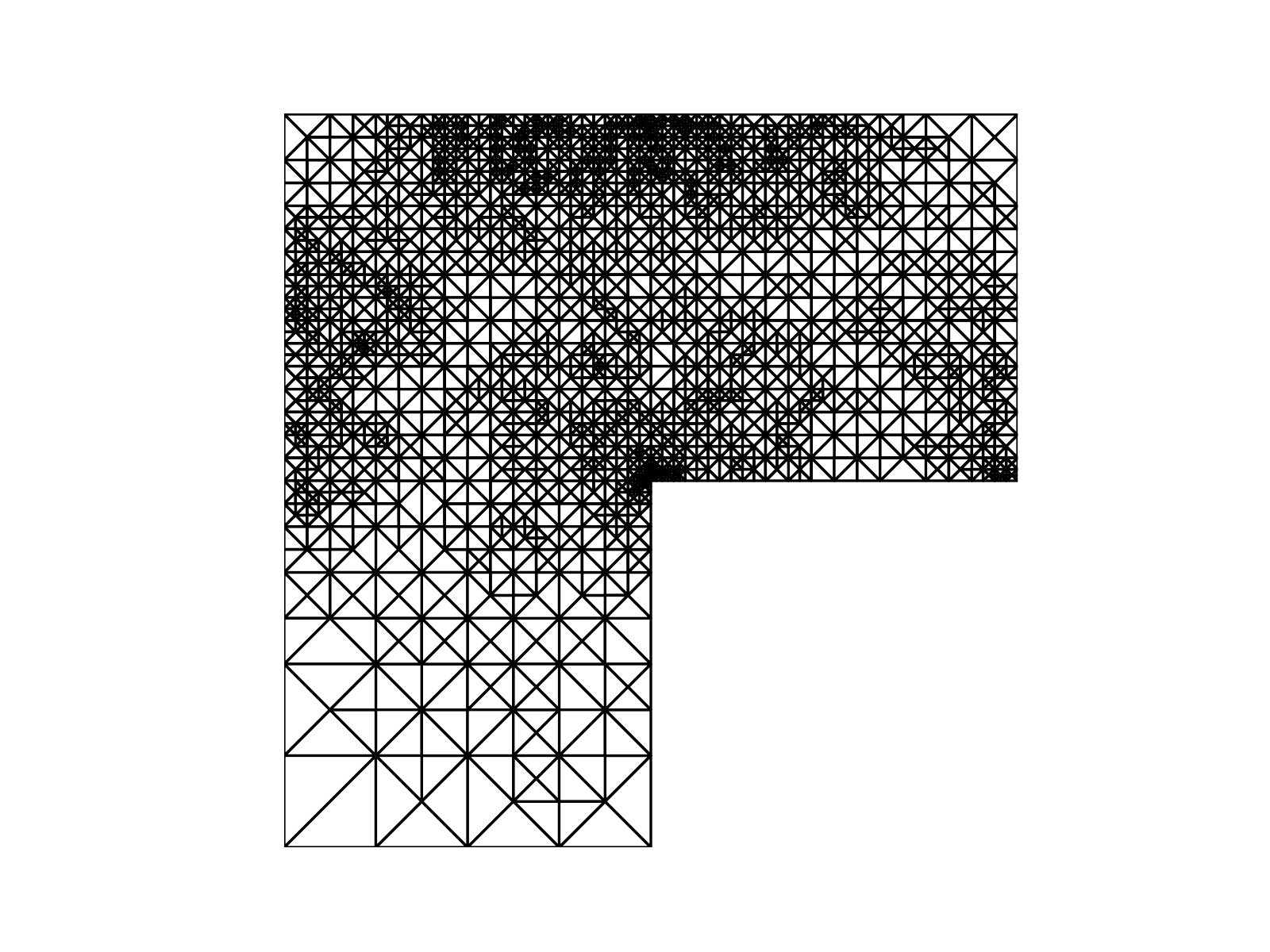}
\caption{Adaptive meshes for Example \ref{aexm1}. Left: level 60; Right: level 70.}
\label{figaexm11}
\end{center}
\end{figure}

\begin{figure}
\begin{center}
    \includegraphics[width=0.5\textwidth]{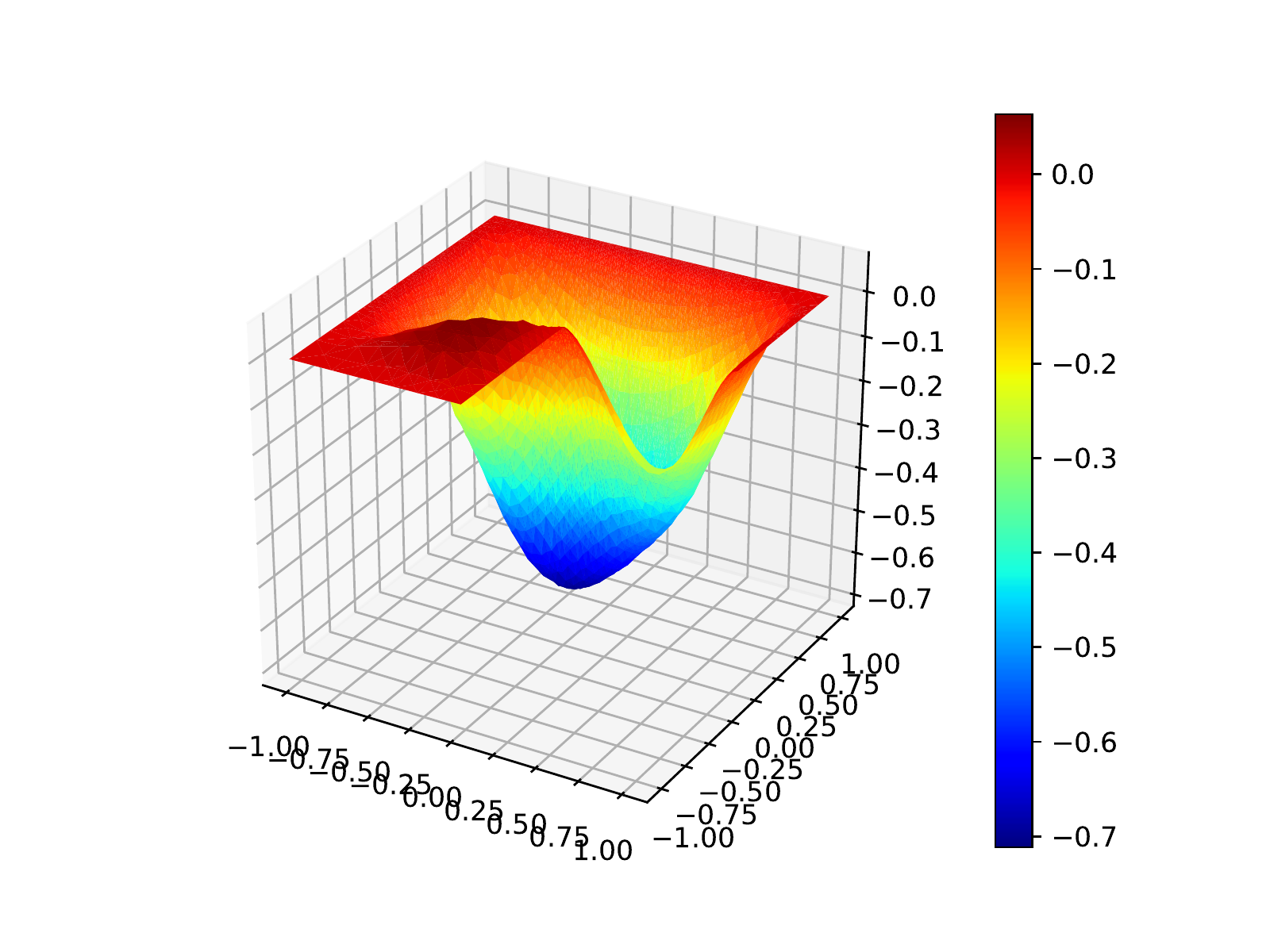}
    \includegraphics[width=0.4\textwidth]{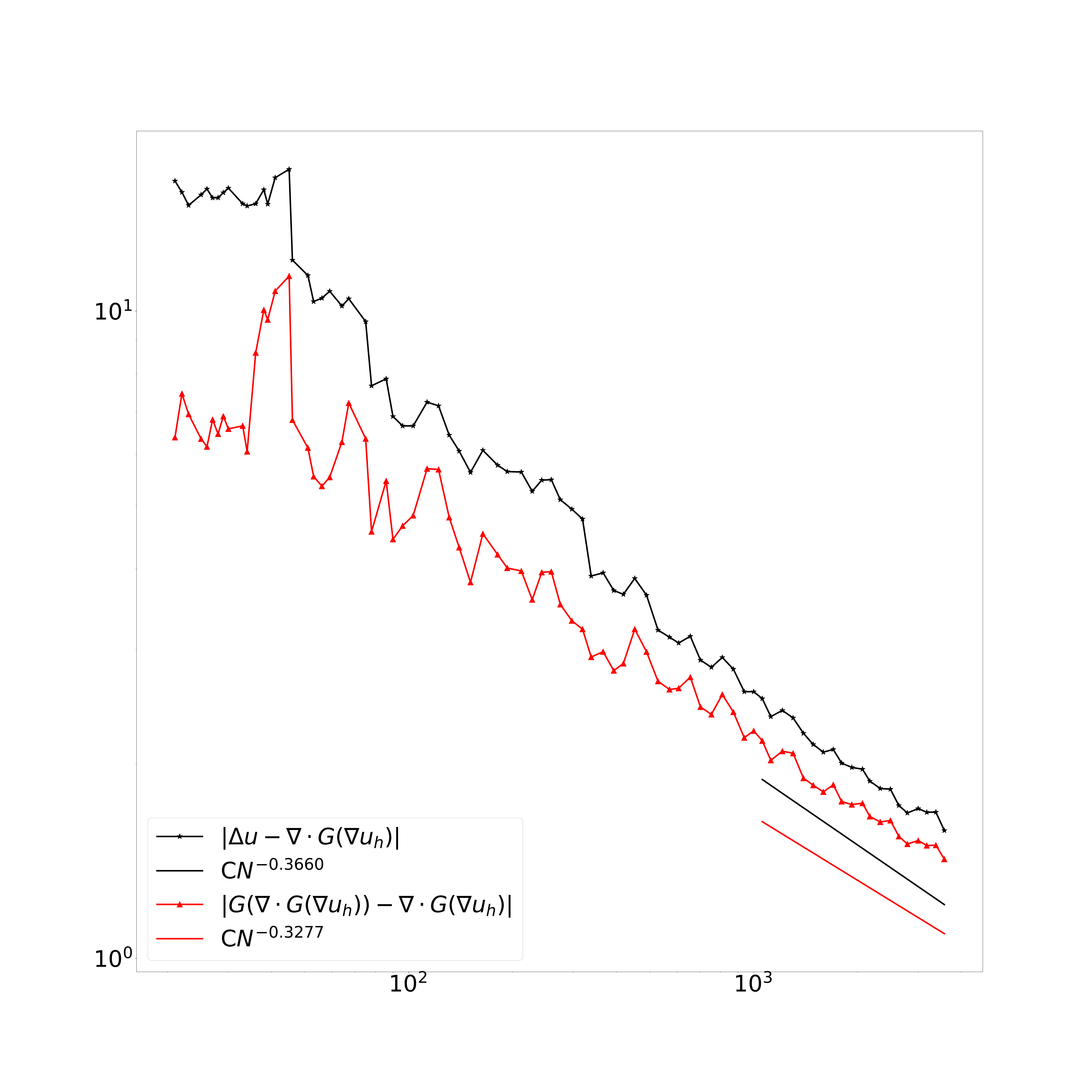}
\caption{Numerical solution and errors history of Example \ref{aexm1}.}
\label{figaexm12}
\end{center}
\end{figure}

Figure \ref{figaexm11} shows the adaptive meshes at refinement level 60 and level 70. 
The error estimator captures the singularities of the solution throughout the mesh refinement process. 
The numerical solution, and corresponding exact errors and error estimators are presented in Figure \ref{figaexm12}. 
We have observed that $\|G(\nabla\cdot G(\nabla
u_h))-\nabla\cdot G(\nabla u_h)\|\approx 1.1\times \|\Delta u-\nabla\cdot G(\nabla u_h)\|$, which means the error estimator is reliable and efficiency.


\begin{example}\label{aexm2}
In this example, we take $\Omega:=(-1,1)^2\setminus conv\{(0, 0), (1, -1), (1,
0)\}$. We consider the model problem \eqref{1e1} on $\Omega$ with the exact
solution given by \eqref{4e3} with $\alpha=0.505009698896589,
\omega=\frac{7\pi}{4}$ and $g_{\alpha,\omega}(\theta)$ is of the form
\eqref{4e4}. 
\end{example}

\begin{figure}
\begin{center}
    \includegraphics[width=0.45\textwidth]{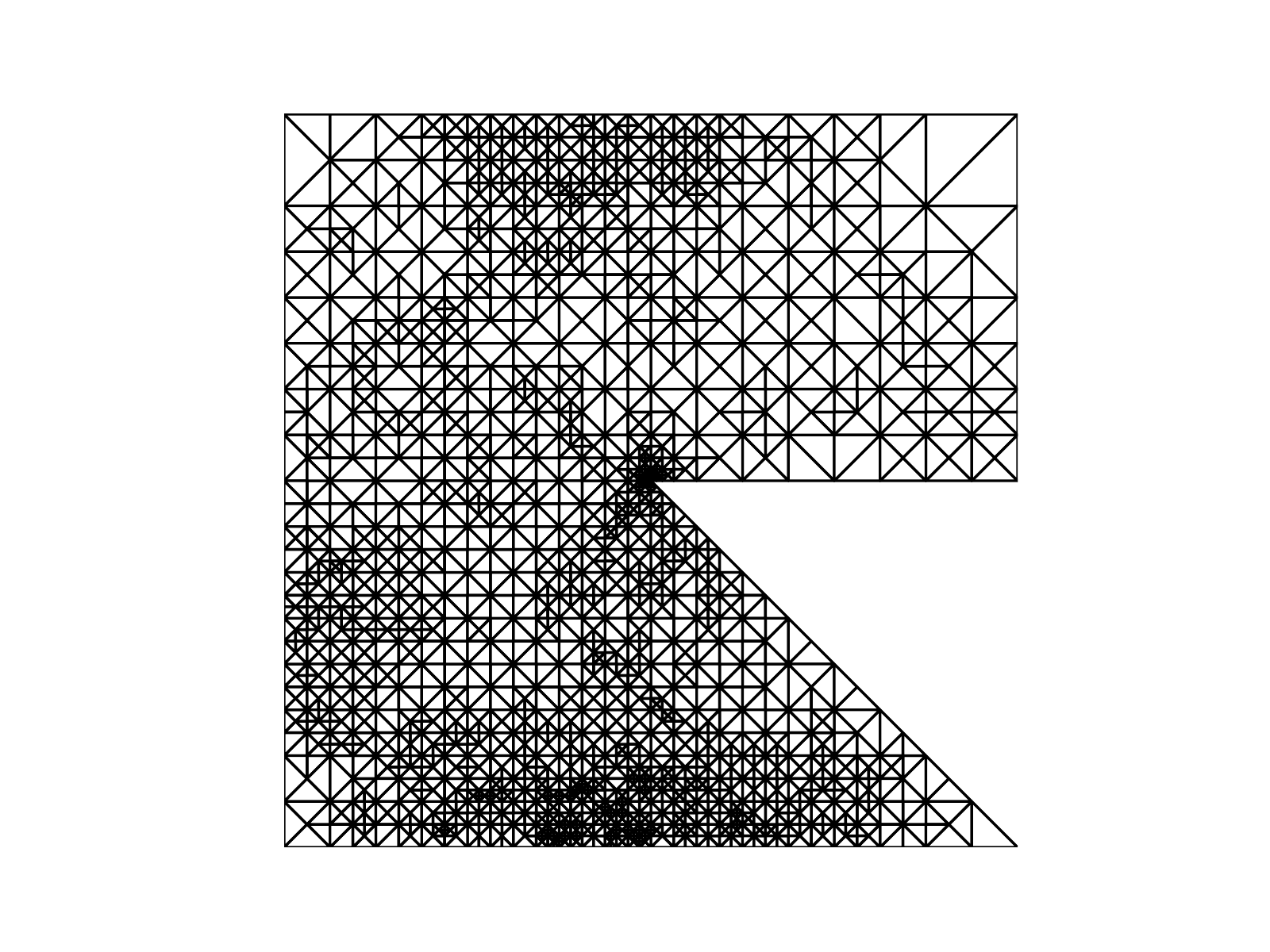}
    \includegraphics[width=0.45\textwidth]{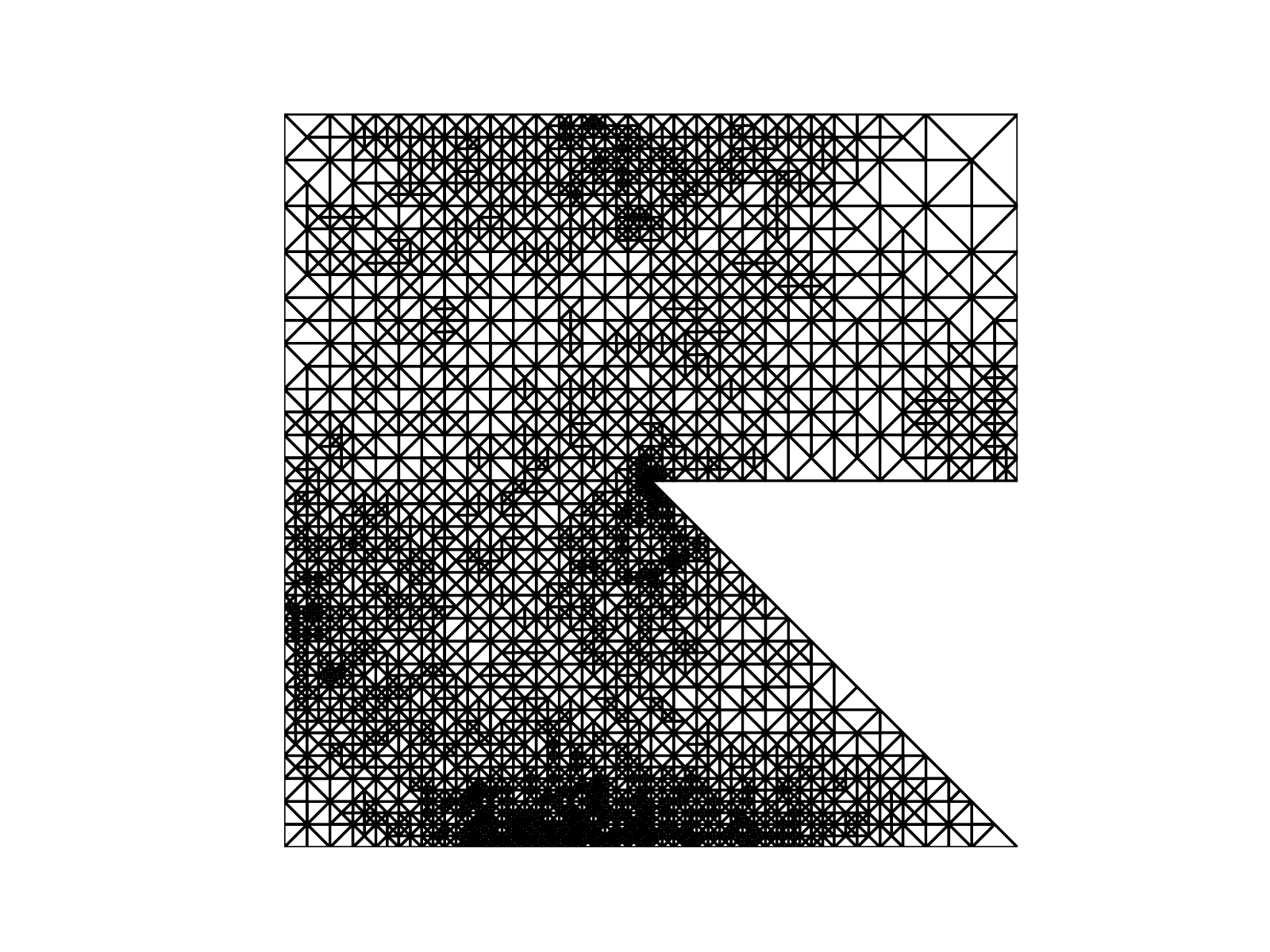}
\caption{Adaptive meshes for Example \ref{aexm2}. Left: level 60; Right: level 70.}
\label{figaexm21}
\end{center}
\end{figure}

\begin{figure}
\begin{center}
    \includegraphics[width=0.5\textwidth]{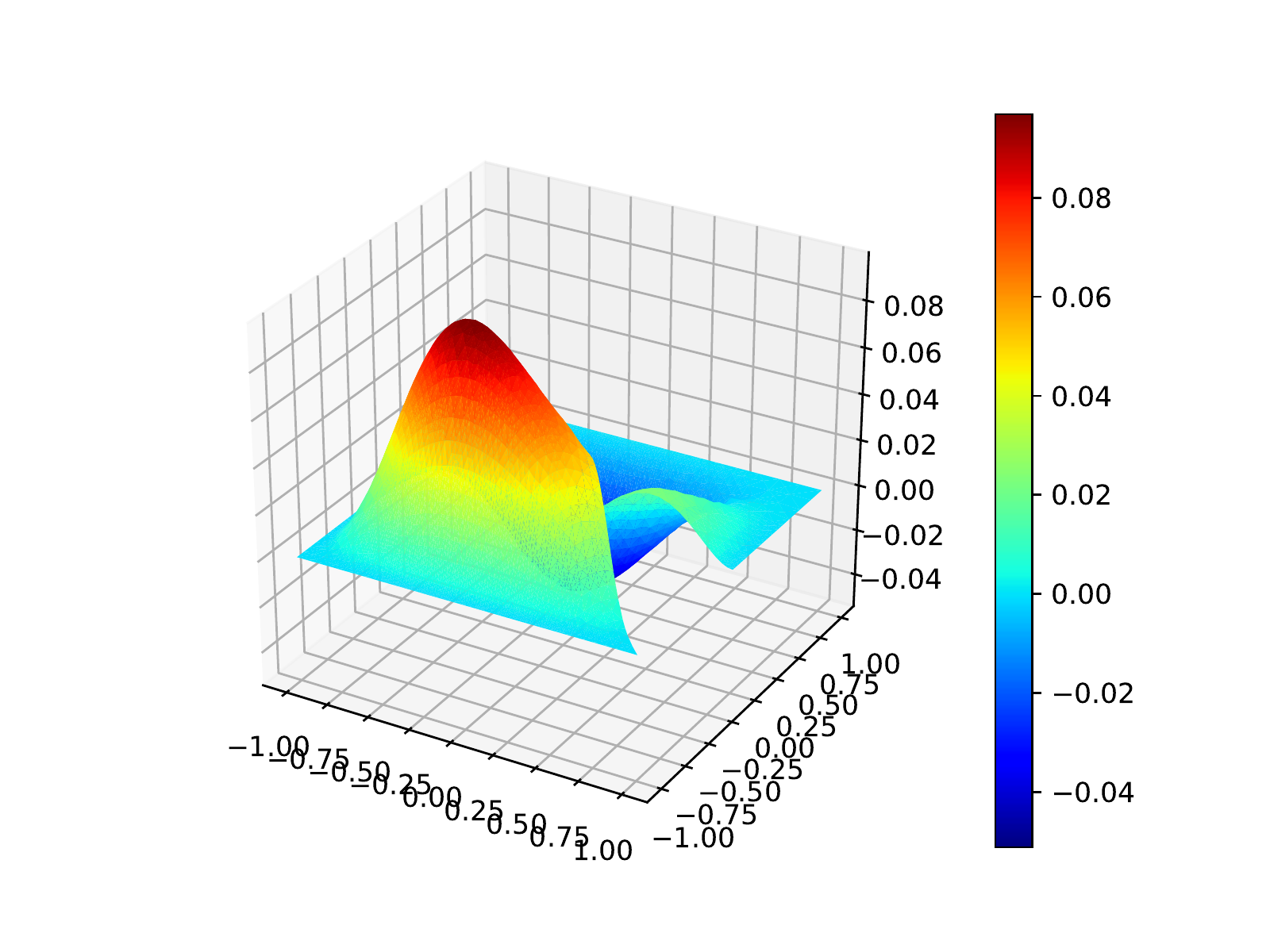}
    \includegraphics[width=0.4\textwidth]{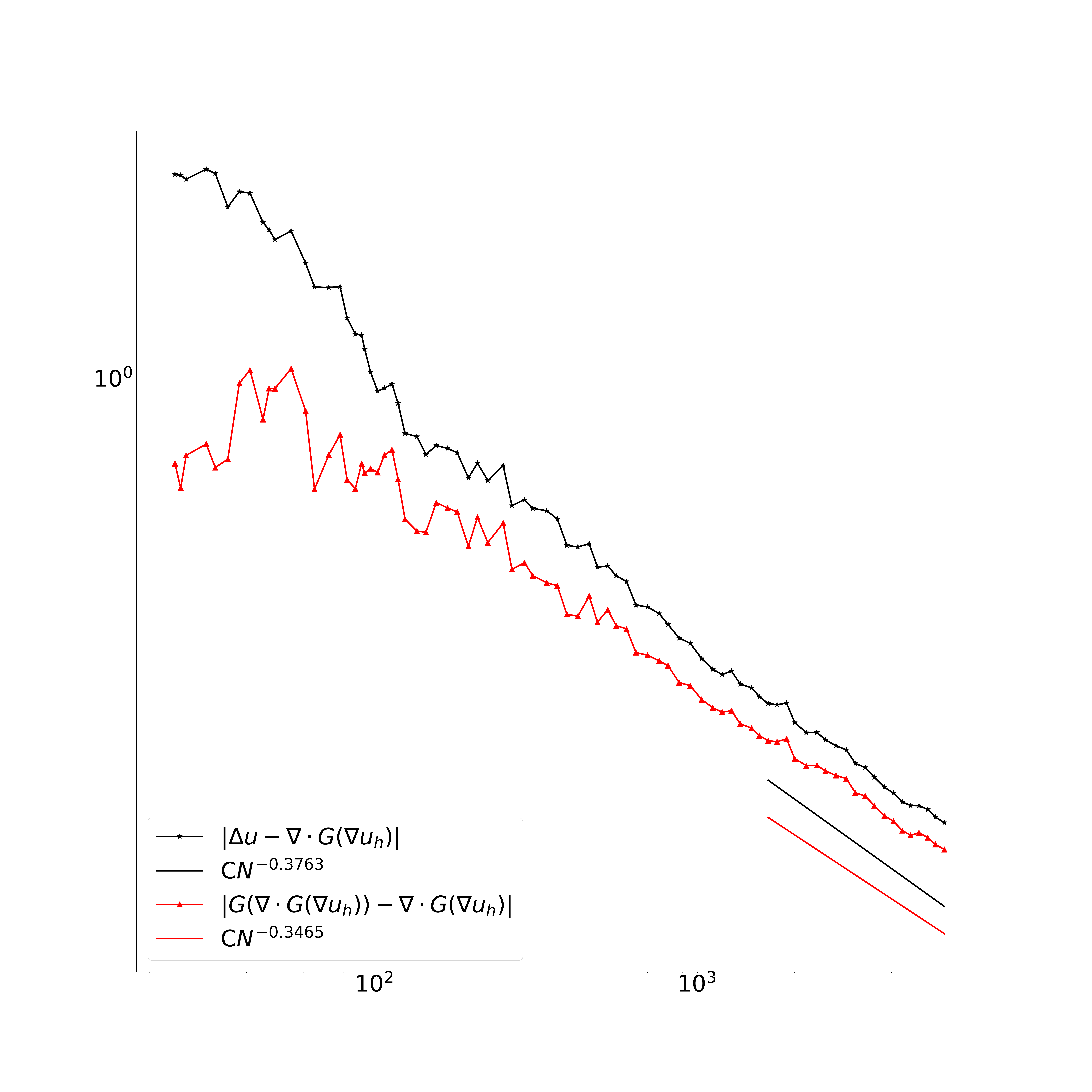}
\caption{Numerical solution and errors history of Example \ref{aexm2}.}
\label{figaexm22}
\end{center}
\end{figure}

Figure \ref{figaexm21} and Figure \ref{figaexm22} display the adaptive meshes, numerical solution and the
convergence history of the error estimators and the exact errors. As in the
previous example, the error estimator yields a good approximation of the true Laplace error, and the singularities of the solution
are well predicted by the error estimator throughout the mesh refinement
process. On the adaptive meshes, we see clearly that the adaptive
mesh-refinement mainly concentrates on the V-corner. We also observe some
additional refinement near the boundary where the gradient is relatively
large. 


\begin{example}\label{aexm3}
In this example, we consider the problem \eqref{4e1} with $f=1$ on the
nonconvex domain $\Omega$ with the corners $(0, 0)$, $(1, 0)$, $(1, 1)$, $(2,
0)$, $(3, 0)$, $(1, 2)$, $(3, 4)$, $(2, 4)$, $(1, 3)$, $(1, 4)$ and $(0, 4)$. 
\end{example}

\begin{figure}
\begin{center}
    \includegraphics[width=0.45\textwidth]{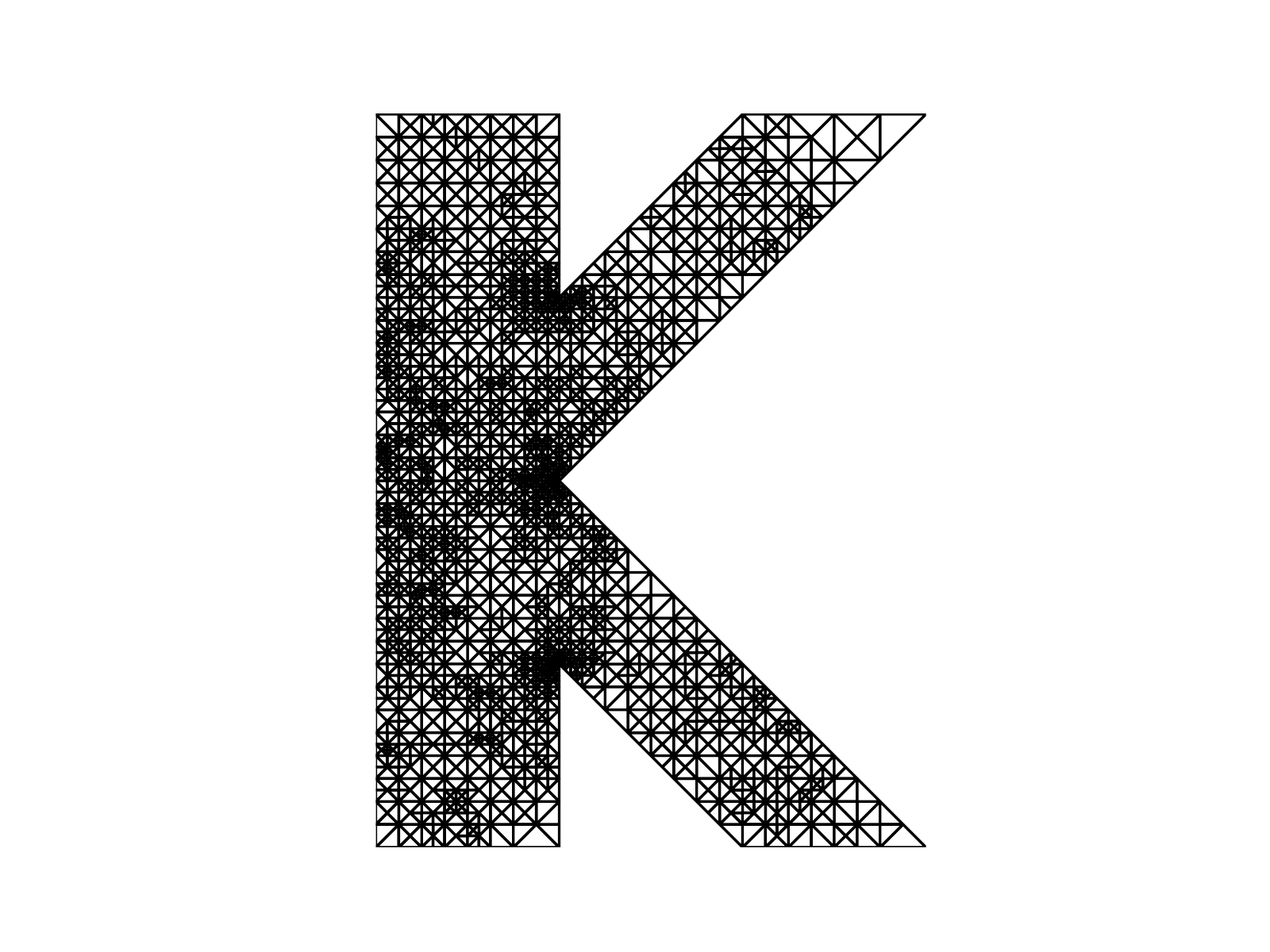}
    \includegraphics[width=0.45\textwidth]{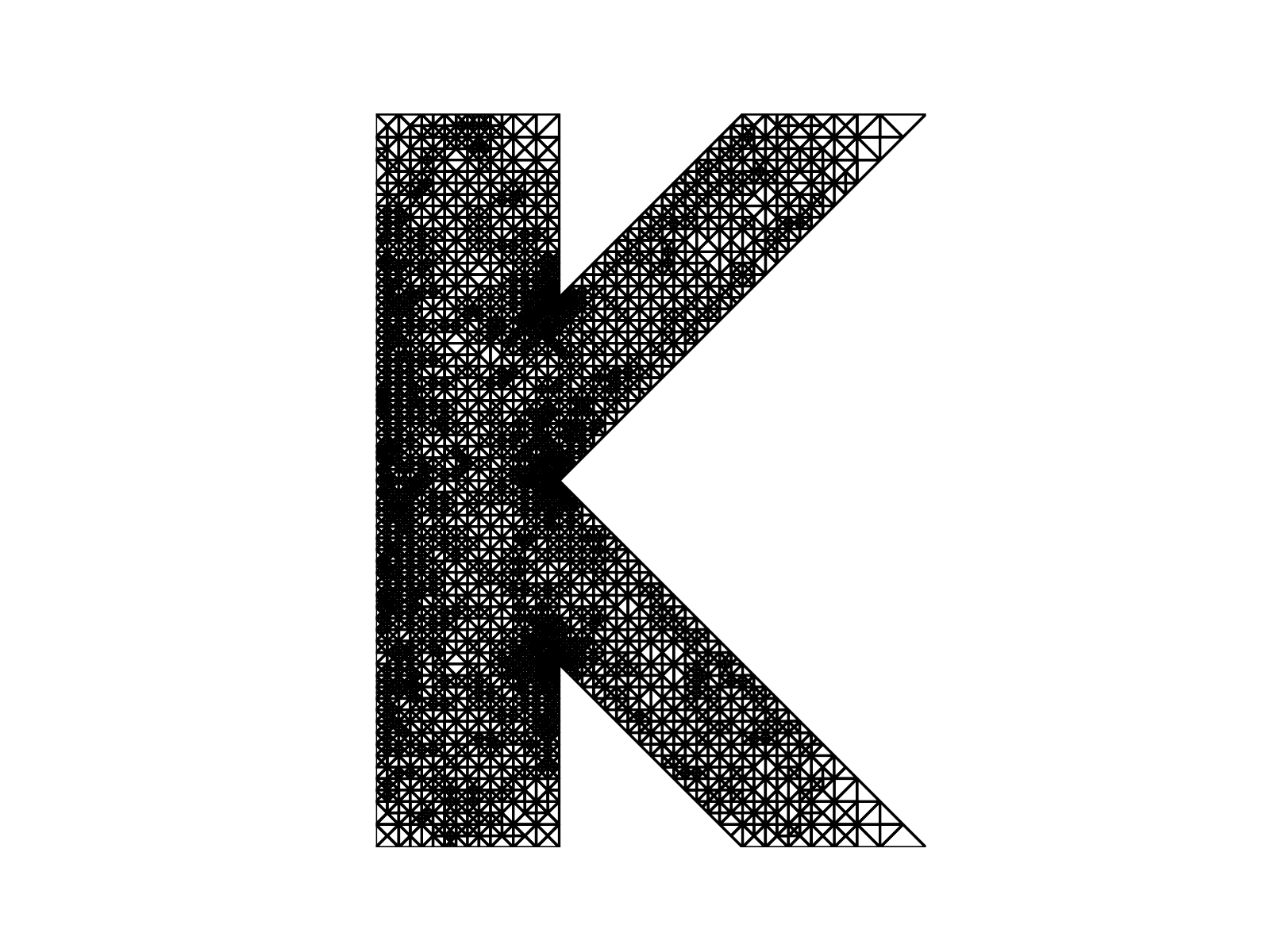}
\caption{Adaptive meshes for Example \ref{aexm3}. Left: level 60; Right: level 70.}
\label{figaexm31}
\end{center}
\end{figure}

\begin{figure}
\begin{center}
    \includegraphics[width=0.5\textwidth]{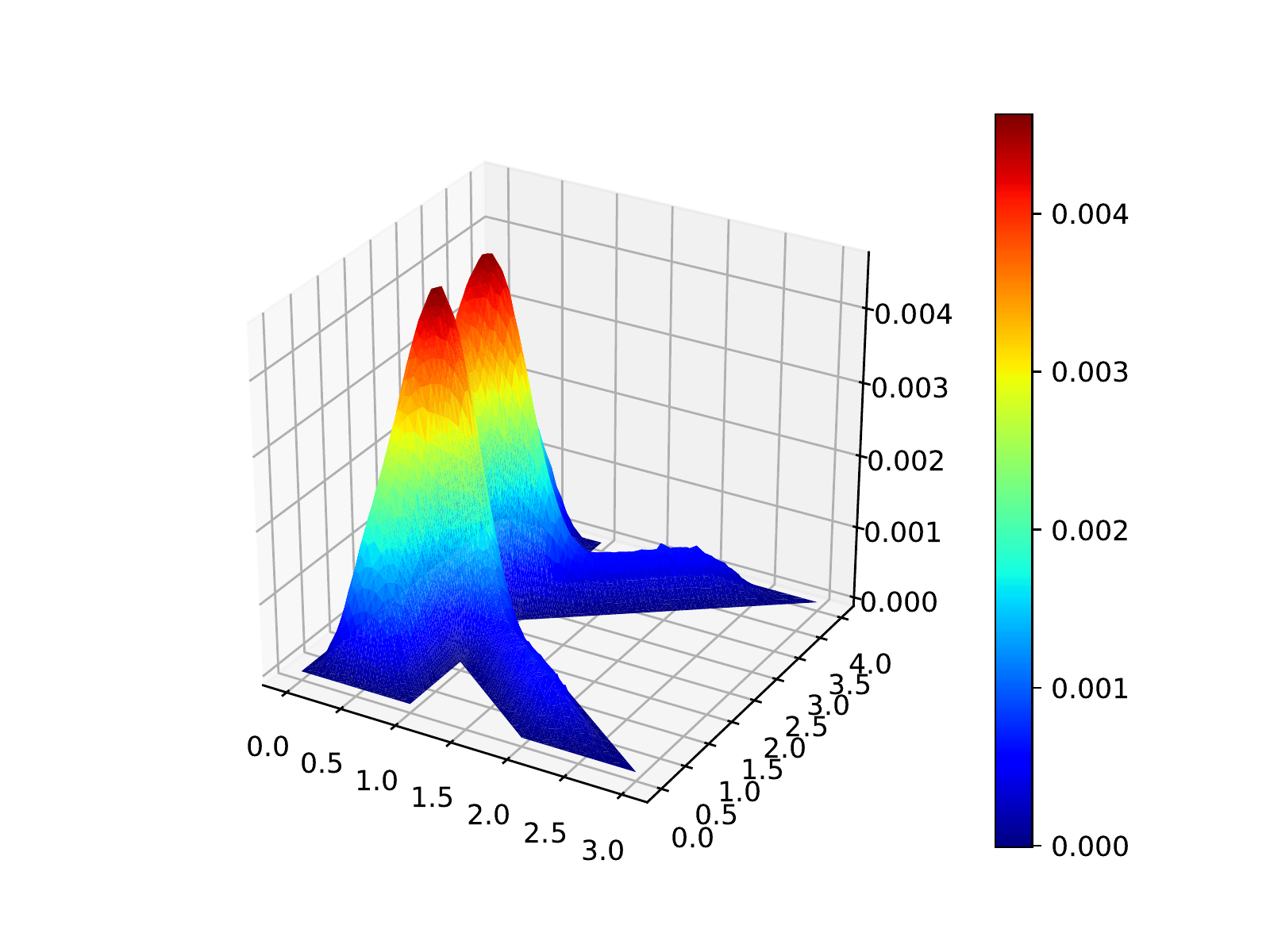}
    \includegraphics[width=0.4\textwidth]{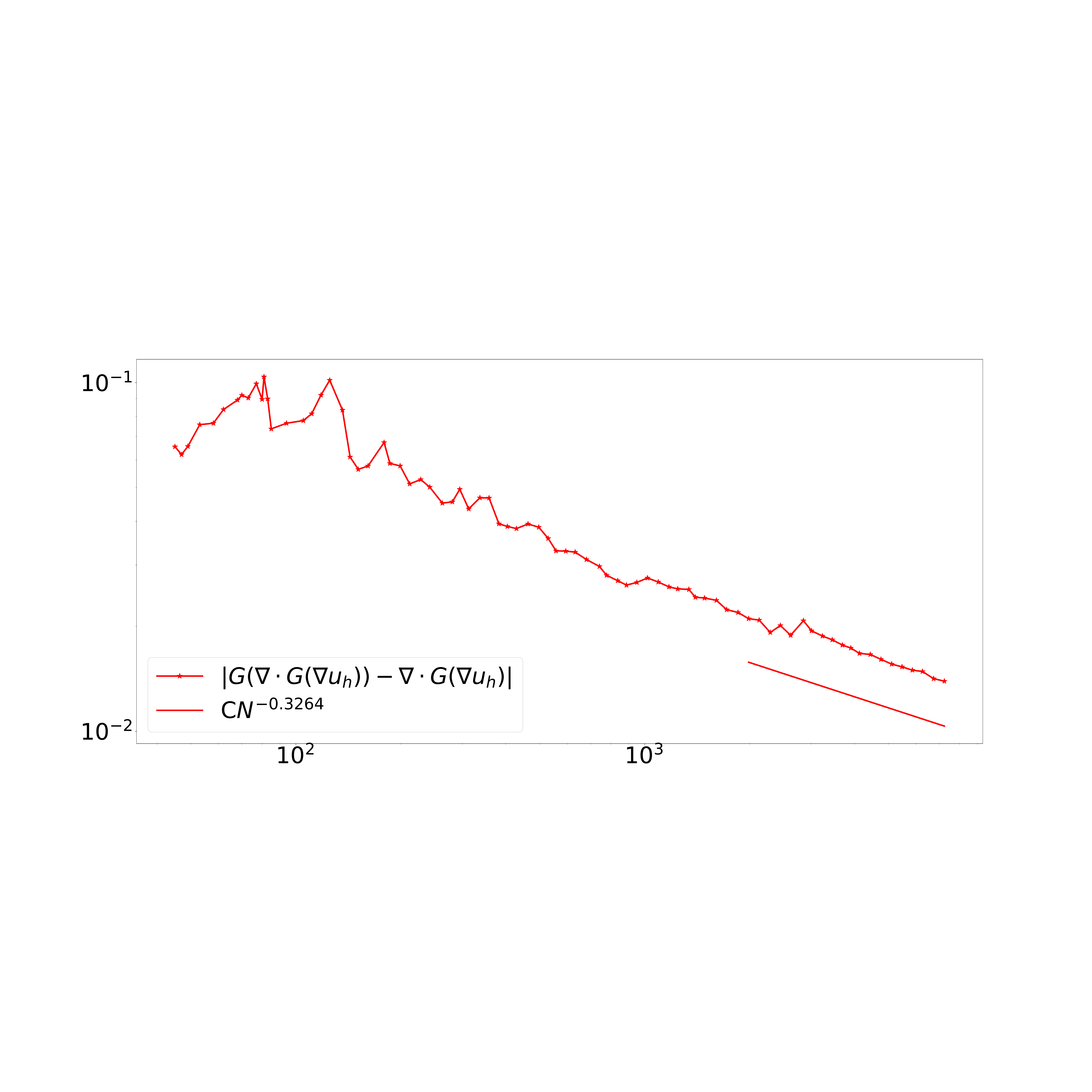}
\caption{Numerical solution and errors history of Example \ref{aexm3}.}
\label{figaexm32}
\end{center}
\end{figure}

In this case, there appear corner singularities at the L-corner and the two
V-corners. Figure \ref{figaexm31} and Figure \ref{figaexm32} plot the adaptive meshes, numerical solution
and the convergence history of the error estimators. We see clearly that the
method finds and clearly distinguishes all the corner singularities and refines
locally near the L-corner and the two V-corners.

\section{Concluding remarks}\label{sec:CON}
In this paper, we have developed a recovery based linear finite element method
for solving the biharmonic equation. In the discrete weak formulation, the
gradient operator $\nabla$ on the linear finite element space is replaced by
$G(\nabla)$ with $G$ denotes a suitable gradient recovery operator. Thus, the
Laplace opetator $\Delta$ is replaced by $\nabla\cdot G(\nabla u)$.
Furthermore, we impose the boundary condition $\nabla u\cdot n|_{\partial
\Omega}=g_2$ by the boundary penalty method. Numerical examples for the
biharmonic equation with the homogeneous or non-homogeneous boundary conditions
are presented for illustrating the correctness and effectiveness of our method.
They show that the recovery based linear finite element method converges with
optimal rates, and the recovered gradient is superclose to the exact one. We
also numerical investigate the effectiveness of the recovery based finite
element method on adaptive meshes. 
The results show that the error estimator captures the singularities of the solution throughout the mesh refinement process.

For the recovery based finite element method for high order partial
differential equations, we will continuou our works in the following issues:
i) design efficient implementation of the recovery based finite element method,
which incorporate with other gradient recovery operators besides the weighted
averaging method; ii) derive the error estimation for recovery based finite
element method; iii) design the preconditioner for the linear algebra system
which is resulting from the recovery based fintie element method; iv) extend
the recovery based finite element method for other high order partial
differential equation, such as the fourth order parabolic equation and the
Cahn-Hilliard type equation arising from the phase filed models. We will report
these results and applications in our future works.

\section*{Acknowledgments}
Huang's research was partially supported by NSFC Project (91430213). 
Wei's research was partially supported by Hunan
Provincial Civil-Military Integration Industrial Development Project. 
Yang's research was partially supported by NSFC Project (11771371) and Hunan Education Department Project (15B236). 
Yi's research was partially supported by NSFC Project (11671341), Hunan Provincial NSF Project (2015JJ2145) and Hunan Education Department Project (16A206). 


\end{document}